\theoremstyle{plain}
\newtheorem{theorem}{Theorem}
\newtheorem{lemma}{Lemma}
\newtheorem{corollary}{Corollary}
\newtheorem*{corollary*}{Наслідок}
\theoremstyle{definition}
\theoremstyle{remark}
\newtheorem*{remark*}{Зауваження}
\newcommand{\N}{\mathbb{N}}
\newcommand{\abs}[1]{\left\lvert#1\right\rvert}
\newcommand{\set}[1]{\left\{#1\right\}}
\newcommand{\bO}[1]{\bar{\mathrm{O}}^{#1}}
\newcommand{\bcyl}[2]{\bar{\mathrm{O}}^{#1}_{[#2]}}
\newcommand{\Cset}[2][\bO1]{C[#1,
  \if\relax\expandafter\@gobble#2\relax #2\else\{#2\}\fi]}
\newcommand{\CsetI}[2][\bO1]{C^*[#1,
  \if\relax\expandafter\@gobble#2\relax #2\else\{#2\}\fi]}
\def\author@andify{%
  \nxandlist {\unskip ,\penalty-1 \space\ignorespaces}%
    {\unskip ,\penalty-2 \space}%
    {\unskip ,\penalty-2 \space}}
\begin{document}

\title[$\bO1$ expansion: dynamical, probabilistic and fractal points of view]
{Ostrogradsky-Sierpi\'nski-Pierce expansion: dynamical systems, probability theory and fractal geometry points of view.}

\author[S.Albeverio, G.Torbin]{S.Albeverio$^{1,2,3,4,5}$, G.Torbin$^{6,7}$}


\begin{abstract}
We establish several new probabilistic, dynamical, dimensional and number theoretical phenomena connected with  Ostrogradsky-Sierpi\'nski-Pierce expansion.

First of all, we develop metric, ergodic and dimensional theories of the Ostrogradsky-Sierpi\'nski-Pierce expansion.  In particular, it is proven that for Lebesgue almost all real numbers any digit $i$ from the  alphabet $A= \mathbb{N} $ appears  only finitely many times  in the difference-version of the Ostrogradsky-Sierpi\'nski-Pierce expansion.

Properties of the symbolic dynamical system generated by a shift-transformation $T$ on the difference-version of the Ostrogradsky-Sierpi\'nski-Pierce expansion are also studied in details. It is shown that there are no probability measures which are invariant and ergodic (w.r.t. $T$) and  absolutely continuous (w.r.t. Lebesgue measure).

Thirdly, we study properties of random variables $\eta$ with independent identically distributed differences of the Ostrogradsky-Sierpi\'nski-Pierce expansion. Necessary and sufficient conditions for $\eta$ to be discrete resp. singularly continuous are found. We prove that $\eta$ can not be absolutely continuously distributed.

\end{abstract}

\maketitle

$^1$~Institut f\"ur Angewandte Mathematik, Universit\"at Bonn,
Endenicher Allee 60, D-53115 Bonn (Germany); $^2$~SFB 611, Bonn; $^3$ BiBoS,
Bielefeld--Bonn; $^4$~CERFIM, Locarno and Acc. Arch., USI
(Switzerland); $^5$~IZKS, Bonn; E-mail:
albeverio@uni-bonn.de

$^6$~National Pedagogical University, Pyrogova str. 9, 01030 Kyiv
(Ukraine) $^{7}$~Institute for Mathematics of NASU,
Tereshchenkivs'ka str. 3, 01601 Kyiv (Ukraine); E-mail:
torbin@iam.uni-bonn.de (corresponding author)

\medskip

\textbf{AMS Subject Classification (2010):}  .

\medskip

\textbf{Key words:} Ostrogradsky-Sierpi\'nski-Pierce expansion, Hausdorff dimension, Cantor-like sets, singularly continuous measures, symbolic dynamics,
invariant ergodic measures.

\section{Introduction}

It is now well known that any real number  $x\in(0,1)$ can be represented in the form
\begin{align}
&\sum_k \frac{(-1)^{k+1}}{q_1q_2\dots q_k}, \quad \text{where $q_k \in \mathbb{N},
q_{k+1}>q_k, ~~ k \in \mathbb{N}$}.\label{eq:o1series}
\end{align}
 If  $x$ is irrational, then this expansion is unique. In the opposite case there are two different expansion of $x$ into series of the above form (\cite{Rem51}).

In the western mathematical literature series of the above form are known as the Pierce series, and in the eastern  literature they are known as the Ostrogradsky series of the first type.  One can find notes on the history of the discovery and the development of such series
in the paper~\cite{PVB99}. Here we would like just to mention that such series can also  be associated with names of Lambert (\cite{Lambert}), Lagrange (\cite{Lagrange}), Sierpi\'nski (\cite{Sie74}).

M.V. Ostrogradsky  was probably the first (1860) who developed a few of numerical properties of such an expansion (see, e.g., \cite{Rem51}).
Some algorithms for the representation of real numbers in positive and alternating series were proposed by W. Sierpi\'nski in \cite{Sie74}. One of these algorithms leads to the series (\ref{eq:o1series}). T.A. Pierce  was probably the first (1929) who used this expansion for a numerical estimation of algebraic roots of polynomials (\cite{Pie29}). In what follows we shall use the notion "Ostrogradsky-Sierpi\'nski-Pierce expansion"  for the above series.

The Ostrogradsky-Sierpi\'nski-Pierce series converges rather quickly, giving a good approximation of irrational numbers by rationals, which are partial sums of the above series.

Let us recall (\cite{ABPT}) that the expression~\eqref{eq:o1series} can be rewritten in the form
\begin{equation}\label{eq:bo1series}
\frac1{g_1}-\frac1{g_1(g_1+g_2)}+\dots+\frac{(-1)^{n-1}}{g_1(g_1+g_2)\dots(g_1+g_2+\dots+g_n)}+\dotsb,
\end{equation}
where
\[
g_1=q_1\quad \text{і}\quad g_{n+1}=q_{n+1}-q_n\quad \text{for all $n\in \mathbb{N}$}.
\]
The expression ~\eqref{eq:bo1series} will be denoted by
\[
\bO1(g_1,g_2,\dots,g_n,\dots).
\]
 and is said to be $\bO1$-expansion (or the Ostrogradsky-Sierpi\'nski-Pierce expansion with independent symbols), and coefficients $g_n=g_n(x)$~are called
$\bO1$-symbols (coefficients) of a real number $x\in(0,1)$. There are several papers on metric theory of this expansion (see, e.g., \cite{ABPT,BPT2006,Sha86, VZ75} and references therein), but they should be considered only as first steps in the development of the general theory like for the continued fractions. There are a lot of common features between these two expansions, but the Ostrogradsky-Sierpi\'nski-Pierce expansion generates essentially more complicated "geometry of cylindrical intervals".
 It is known that the development of  metric and ergodic theories of some expansion for reals can be essentially simplified if one can find a measure which is invariant and ergodic w.r.t. one-sided shift transformation on the corresponding expansion and absolutely continuous w.r.t. Lebesgue measure (see, e.g., \cite{Schweiger_book}).  For instance, having the Gauss measure (i.e., the probability measure with density $f(x)= \frac{1}{\ln 2} \frac{1}{1+x}$ on the unit interval) as invariant and ergodic measure w.r.t. the transformation $T(x)=\frac{1}{x} (mod 1)$, one can easily derive main metric and ergodic properties of continued fraction expansions (see, e.g., \cite{Bil,Khi63,Schweiger_book}).

The main aims of the paper are:

1) to develop  ergodic, metric and dimensional theories of $\bO1$ - expansion for real numbers (in particular, to find normal properties of real numbers, depending on asymptotic frequencies  $\nu_{i}(x, \bO1)$  of $\bO1$-symbols ($i \in \mathbb{N}$), where $\nu_{i}(x, \bO1)= \lim\limits_{n\to \infty} \frac{N_i(x,n)}{n}$,  and  $N_i(x,n)$ is the number of terms "$i$"  among the first $n$ $\bO1$-coefficients of $x$);

2) to study properties of the symbolic dynamical system generated by the one-sided shift transformation on the $\bO1$-expansion:
$$
 \forall ~ x = \bO1(g_1(x), g_2(x), \dots, g_n(x), \dots)~~ \in [0,1],
 $$
  $$T(x) = T(\bO1(g_1(x), g_2(x),\dots, g_n(x),\dots))= \bO1(g_2(x), g_3(x),\dots, g_n(x),\dots);
$$
3) to study  distributions of random variables
 $$\eta =\sum_{k=1}^\infty
\frac{(-1)^{k-1}}{\eta_1(\eta_1+\eta_2)\dots(\eta_1+\eta_2+\dots+\eta_k)}
=\bO1(\eta_1,\eta_2,\dots,\eta_k,\dots),
$$
whose $\bO1$-symbols $\eta_k$ are  independent random variables taking the values $1$, $2$,~$\dots$, $m$,~$\dots$ with probabilities $p_{1k}$, $p_{2k}$,~$\dots$, $p_{mk}$,~$\dots$
respectively, $ p_{mk}\geq0, \quad
\sum\limits_{m=1}^\infty {p_{mk}}=1, ~ \forall k \in \mathbb{N}.
$

\section{Sets $\Cset{V_n}$ and their metric and fractal properties.}

Let $\{V_n\}$~ be a given sequence of non-empty subsets of positive integers. Let us consider the set  $\Cset{V_n}$, which is the closure of  the set $\CsetI{V_n}$ of all irrational numbers
$x=\bO1(g_1(x),g_2(x),\dots,g_n(x),\dots)$ such that  $g_n(x)\in V_n$ for all $n\in \mathbb{N}$.

It is clear that  $\Cset{V_n}$ is a nowhere dense set if and only if the condition $V_n\not= \mathbb{N}$ holds  for an infinite number of  $n$'s.
The set of real numbers whose continued fraction expansion does not contain a given symbol $i \in \mathbb{N}$ is a Cantor-like set of zero Lebesgue measure. Indeed, almost all (in the sense of Lebesgue measure) real numbers contain a given digit $i$ with non-zero asymptotic frequency $\nu^{c.f.}_i =\frac{1}{\ln2} \ln \frac{(i+1)^2}{i(i+2)}$ (see, e.g., \cite{Bil}). So, all points without using the symbol $i$ belong to the exceptional zero-set.
For the Ostrogradsky-Sierpi\'nski-Pierce expansion the metric properties of $\Cset{V_n}$  depend essentially on the sequence of sets $V_k$ of admissible digits.

In \cite{ABPT,BPT2006} some sufficient conditions for the set  $\Cset{V_n}$  to be of zero resp. positive Lebesgue measure are found. We collect here some of these results without proof to be used later in the paper and to stress essential differences in metric theories of continued fractions and the Ostrogradsky-Sierpi\'nski-Pierce expansions.
\begin{theorem} \label{1,2,...n} 
 Let  $V_k=\set{1,2,\dots,m_k}$, $m_k\in\mathbb{N}$.

1) If $
\sum\limits_{k=1}^\infty\frac{m_1+m_2+\dots+m_k}{m_{k+1}}<\infty,
$
then the Lebesgue measure $\lambda(\Cset{V_n})>0$.

2) If $
\sum\limits_{k=1}^\infty\frac{k}{m_{k}} = \infty,
$
then $\lambda(\Cset{V_n})=0$.
\end{theorem}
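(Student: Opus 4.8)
The plan is to work directly with the rank-$n$ cylinders of the $\bO1$-expansion and to track how much Lebesgue measure survives passing from rank $n-1$ to rank $n$. For $g_1,\dots,g_n\in\N$ let $\bcyl{1}{g_1,\dots,g_n}$ be (the closure of) the set of $x$ with $g_i(x)=g_i$ for $i\le n$, and write $Q_k:=g_1+\dots+g_k$, so $Q_k$ is the classical Ostrogradsky denominator $q_k$. The first step is the cylinder-length formula
\[
\lambda\bigl(\bcyl{1}{g_1,\dots,g_n}\bigr)=\frac{1}{Q_1Q_2\cdots Q_n\,(Q_n+1)},
\]
whose two endpoints are the partial sum $S_n:=\sum_{k=1}^{n}\frac{(-1)^{k-1}}{Q_1\cdots Q_k}$ and $S_n+\frac{(-1)^n}{Q_1\cdots Q_n(Q_n+1)}$, both rational; consequently distinct rank-$n$ cylinders have disjoint interiors and every irrational lies in the interior of exactly one rank-$n$ cylinder. (This is the cylinder geometry of \cite{ABPT}; I would reprove it from the alternating-series representation by computing the two extreme values of the tail $\bO1(Q_n+g_{n+1},g_{n+2},\dots)$ over $g_{n+1}\ge1$, $g_{n+2},g_{n+3},\dots\ge1$.) Putting $E_n:=\bigcup_{g_i\in V_i,\ i\le n}\bcyl{1}{g_1,\dots,g_n}$ --- a decreasing sequence of finite unions of closed intervals --- the remark on endpoints shows that $\bigcap_n E_n$ differs from $\CsetI{V_n}$ by at most a countable set, so $\lambda(\Cset{V_n})=\lim_n\lambda(E_n)$ and $\lambda(E_n)=\sum_{g_i\in V_i}\lambda(\bcyl{1}{g_1,\dots,g_n})$.

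The core computation is the ``survival rate'' inside one cylinder. Fix $g_1,\dots,g_{n-1}$ and set $s:=Q_{n-1}$; the admissible rank-$n$ subcylinders are exactly those with $Q_n\in\set{s+1,\dots,s+m_n}$, and the telescoping identity $\sum_{t=s+1}^{s+m_n}\frac{1}{t(t+1)}=\frac{1}{s+1}-\frac{1}{s+m_n+1}$ gives
\[
\frac{\sum_{g_n=1}^{m_n}\lambda\bigl(\bcyl{1}{g_1,\dots,g_n}\bigr)}{\lambda\bigl(\bcyl{1}{g_1,\dots,g_{n-1}}\bigr)}
=(s+1)\Bigl(\tfrac{1}{s+1}-\tfrac{1}{s+m_n+1}\Bigr)=\frac{m_n}{s+m_n+1}.
\]
Since $t\mapsto\frac{m_n}{t+m_n+1}$ is decreasing and $n-1\le Q_{n-1}\le M_{n-1}$, with $M_k:=m_1+\dots+m_k$, summing over the rank-$(n-1)$ cylinders (and using $M_0=0$, $E_0=[0,1]$) yields
\[
\frac{m_n}{M_{n-1}+m_n+1}\,\lambda(E_{n-1})\ \le\ \lambda(E_n)\ \le\ \frac{m_n}{m_n+n}\,\lambda(E_{n-1});
\]
iterating, $\prod_{n\ge1}\frac{m_n}{M_{n-1}+m_n+1}\le\lambda(\Cset{V_n})\le\prod_{n\ge1}\frac{m_n}{m_n+n}$.

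Both statements are now consequences of the classical criterion $\prod(1-a_n)>0\iff\sum a_n<\infty$ for $a_n\in[0,1)$. For 1): the left-hand product is positive iff $\sum_{n\ge1}\frac{M_{n-1}+1}{M_{n-1}+m_n+1}<\infty$; for $n\ge2$ one has $M_{n-1}\ge n-1\ge1$, hence $\frac{M_{n-1}+1}{M_{n-1}+m_n+1}\le\frac{2M_{n-1}}{m_n}$, and the series is therefore dominated by $2\sum_{k\ge1}\frac{m_1+\dots+m_k}{m_{k+1}}<\infty$, which is the hypothesis. For 2): the right-hand product vanishes iff $\sum_{n}\frac{n}{m_n+n}=\infty$; and $\frac{n}{m_n+n}\ge\frac12\min\set{1,\frac{n}{m_n}}$ (examine $m_n\le n$ and $m_n>n$ separately), so $\sum_n\frac{n}{m_n}=\infty$ forces $\sum_n\frac{n}{m_n+n}=\infty$ --- if $m_n\le n$ for infinitely many $n$ then infinitely many summands are $\ge\frac12$, and otherwise $m_n>n$ eventually, so the tail of $\sum_n\frac{n}{m_n+n}$ dominates $\frac12$ of the (divergent) tail of $\sum_n\frac{n}{m_n}$. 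In both cases $\lambda(\Cset{V_n})$ is trapped between the required bounds.

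The one genuinely nontrivial ingredient is the first paragraph. Unlike for continued fractions, the $\bO1$-cylinders are not images of $[0,1]$ under M\"obius maps, so the length formula, the rationality of the cylinder endpoints, and the correct nesting of cylinders must all be verified directly from the alternating series --- this is exactly the ``essentially more complicated geometry of cylindrical intervals'' mentioned above. It is also the point where the special form $V_k=\set{1,\dots,m_k}$ is used essentially: it makes the admissible values of $Q_n$ a contiguous block of integers, which is what allows the telescoping sum, and hence the survival rate, to be evaluated in closed form; for general $V_k$ one would only get two-sided estimates of the same shape, which would still suffice but with worse constants.
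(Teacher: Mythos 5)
Your argument is correct, but there is nothing in the paper to compare it against: the paper states this theorem explicitly \emph{without proof}, as a result imported from \cite{ABPT,BPT2006}. Judged on its own, your derivation is sound and self-contained. The survival-rate identity $\frac{m_n}{s+m_n+1}$ follows correctly from the telescoping sum $\sum_{t=s+1}^{s+m_n}\frac{1}{t(t+1)}=\frac{1}{s+1}-\frac{1}{s+m_n+1}$ together with the cylinder-length formula $\abs{\bcyl1{c_1\dots c_k}}=\frac{1}{\sigma_1\cdots\sigma_k(\sigma_k+1)}$, which is exactly the formula the paper quotes from \cite{ABPT} in Section 2; the monotonicity of $s\mapsto\frac{m_n}{s+m_n+1}$ combined with the bounds $n-1\le Q_{n-1}\le M_{n-1}$ gives the two-sided product estimates; and the reduction of hypothesis 1) to $\sum\frac{M_{n-1}+1}{M_{n-1}+m_n+1}<\infty$ and of hypothesis 2) to $\sum\frac{n}{m_n+n}=\infty$ via the criterion $\prod(1-a_n)>0\iff\sum a_n<\infty$ is carried out correctly (your case split $m_n\le n$ versus $m_n>n$ handles part 2) cleanly). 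The only facts you assume rather than prove in full --- the length formula, the rationality of cylinder endpoints, the essential disjointness of distinct rank-$n$ cylinders, and the identification of $\Cset{V_n}$ with $\bigcap_nE_n$ up to a countable set --- are precisely the cylinder-geometry statements of \cite{ABPT}, and you flag them honestly and sketch how they follow from the alternating series. One could add that your two-sided bound is slightly stronger than the theorem as stated, since it traps $\lambda(\Cset{V_n})$ between two explicit products, but that is a bonus rather than a gap.
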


\textbf{Example.}

1)   If $m_k = 2^{k!}$, then $\lambda(\Cset{V_n})>0$.

2)  If $m_k = k^2$, then $\lambda(\Cset{V_n})=0$.
\bigskip

\begin{theorem} \label{v+1, v+2, ...} 

 Let $V_k=\set{v_k+1, v_k+ 2,\dots }$, $v_k \in \mathbb{N}$.

If
$\sum\limits_{k=1}^\infty \frac{v_k}{2^k}<+\infty $, then $\lambda(\Cset{V_n})>0.
$
\end{theorem}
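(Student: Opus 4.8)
The plan is to bound $\lambda\bigl(\bigcap_{n\ge 1}E_n\bigr)$ from below, where $E_n:=\{x\in[0,1]: g_k(x)\ge v_k+1 \text{ for } k=1,\dots,n\}$; since $\Cset{V_n}\supseteq\CsetI{V_n}$ and $\CsetI{V_n}$ coincides with $\bigcap_n E_n$ up to the null set of rationals, it suffices to show $\lambda\bigl(\bigcap_n E_n\bigr)>0$. I will use the structural feature of the $\bO1$-expansion that makes its cylinder geometry rigid: writing $a_k:=c_1+\dots+c_k$, the cylinder $\bcyl{1}{c_1,\dots,c_n}$ is an interval of length $\bigl(a_1\cdots a_{n-1}\bigr)^{-1}\bigl(a_n(a_n+1)\bigr)^{-1}$, and, conditionally on $x\in\bcyl{1}{c_1,\dots,c_n}$, the law of $(g_{n+1}(x),g_{n+2}(x),\dots)$ depends on $c_1,\dots,c_n$ \emph{only} through $s:=a_n$, with $g_{n+1}=j$ of conditional probability $\frac{s+1}{(s+j)(s+j+1)}$ and the same self-similar picture repeating with $s$ replaced by $s+j$. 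Consequently the conditional probability $\phi_n(s)$ that $g_k(x)\ge v_k+1$ for all $k\ge n$, given that $x$ lies in a rank-$(n-1)$ cylinder whose symbols sum to $s$, is well defined; it satisfies
\[
\phi_n(s)=\sum_{j\ge v_n+1}\frac{s+1}{(s+j)(s+j+1)}\,\phi_{n+1}(s+j),
\]
and, for every $c_1\ge v_1+1$, $\lambda\bigl(\bigcap_n E_n\bigr)\ge\lambda\bigl(\bcyl{1}{c_1}\bigr)\,\phi_2(c_1)=\phi_2(c_1)\big/\bigl(c_1(c_1+1)\bigr)$. Putting $\psi_n:=1-\phi_n$ the recursion turns into $\psi_n(s)=\frac{v_n}{s+v_n+1}+\sum_{j\ge v_n+1}\frac{s+1}{(s+j)(s+j+1)}\,\psi_{n+1}(s+j)$.

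The heart of the proof is the estimate $\psi_n(s)\le B_n/(s+1)$ for all $n\ge1$ and all integers $s\ge0$, where $B_n:=2^{n}\sum_{k\ge n}v_k2^{-k}$; by hypothesis each $B_n$ is finite, and $B_n=v_n+\tfrac12 B_{n+1}$. I would prove it by induction on the number of iterations of the recursion, i.e. for the finite-horizon quantities $\phi^{(N)}_n$ ($\phi^{(0)}\equiv 1$) which decrease to $\phi_n$, so that $\psi^{(N)}_n:=1-\phi^{(N)}_n$ increase to $\psi_n$ and the uniform bound survives the limit. The inductive step uses $\frac{v_n}{s+v_n+1}\le\frac{v_n}{s+1}$ together with the elementary inequality
\[
\sum_{m\ge M}\frac{1}{m(m+1)^2}\ \le\ \frac{1}{2M^2}\qquad(M\in\N),
\]
which telescopes from $\frac{1}{m(m+1)^2}\le\frac{1}{2m^2}-\frac{1}{2(m+1)^2}$ (equivalent to $2m\le 2m+1$). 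Indeed, assuming $\psi^{(N-1)}_{n+1}(t)\le B_{n+1}/(t+1)$ for all $t$, and substituting $m=s+j$,
\[
\psi^{(N)}_n(s)\ \le\ \frac{v_n}{s+1}+B_{n+1}(s+1)\!\!\sum_{m\ge s+v_n+1}\!\!\frac{1}{m(m+1)^2}\ \le\ \frac{v_n}{s+1}+\frac{B_{n+1}}{2(s+1)}\ =\ \frac{B_n}{s+1}.
\]

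Granting this, the theorem drops out: since $B_2<\infty$, $\psi_2(s)<1$ for every $s\ge\lceil B_2\rceil$, so taking $c_1:=\max\{v_1+1,\lceil B_2\rceil\}\in V_1$ gives $\phi_2(c_1)\ge1-\frac{B_2}{c_1+1}>0$, hence $\lambda(\Cset{V_n})\ge\lambda(\CsetI{V_n})\ge\phi_2(c_1)\big/\bigl(c_1(c_1+1)\bigr)>0$.

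The main obstacle is choosing the right monotone object. Crude estimates — bounding the measure removed at step $n+1$ cylinderwise by the supremum of the removal ratio, using only $a_n\ge n$ — yield the sufficient condition $\sum_n v_{n+1}/(n+1)<\infty$, which is strictly weaker than $\sum_n v_n/2^n<\infty$ (take $v_n$ of order $2^n/n^2$); the measure actually removed is far smaller because, conditioned on survival, the partial sums $a_n$ grow super-exponentially. To capture this one must propagate the whole profile $s\mapsto\psi_n(s)$ rather than a number, with the self-improving ansatz $B_n/(s+1)$, and the halving $B_n=v_n+\tfrac12B_{n+1}$ — hence the exact weight $2^{-k}$ in the hypothesis — is precisely the asymptotically sharp constant $\tfrac12$ in $\sum_{m\ge M}m^{-1}(m+1)^{-2}\le\tfrac12 M^{-2}$, so the estimate leaves essentially no slack.
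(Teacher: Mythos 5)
The paper does not actually prove this theorem: it is explicitly imported ``without proof'' from \cite{ABPT,BPT2006}, so there is no in-paper argument to compare yours against. Judged on its own, your proof is correct and self-contained. The three load-bearing ingredients all check out: (i) the conditional law of $(g_{n+1},g_{n+2},\dots)$ given a rank-$n$ cylinder depends only on $s=\sigma_n$, since the ratio of cylinder lengths is $\frac{s+1}{(s+j_1)(s+j_1+j_2)\cdots(s+j_1+\cdots+j_m)(s+j_1+\cdots+j_m+1)}$, so $\phi_n(s)$ is well defined and the recursion for $\psi_n=1-\phi_n$ is exactly as you state, because $1-\sum_{j\ge v_n+1}\frac{s+1}{(s+j)(s+j+1)}=\frac{v_n}{s+v_n+1}$; (ii) the telescoping bound $\sum_{m\ge M}\frac{1}{m(m+1)^2}\le\frac{1}{2M^2}$ is valid, and feeding $\psi^{(N-1)}_{n+1}(t)\le B_{n+1}/(t+1)$ into the recursion gives
\[
\psi^{(N)}_n(s)\le\frac{v_n}{s+1}+B_{n+1}(s+1)\cdot\frac{1}{2(s+v_n+1)^2}\le\frac{v_n+\tfrac12B_{n+1}}{s+1}=\frac{B_n}{s+1},
\]
with the induction on the horizon $N$ (uniformly in $n$ and $s$) correctly organized so that the bound passes to the monotone limit $\psi^{(N)}_n\uparrow\psi_n$; and (iii) starting from a single first digit $c_1=\max\{v_1+1,\lceil B_2\rceil\}\in V_1$ yields $\lambda(\Cset{V_n})\ge\lambda(\CsetI{V_n})\ge\phi_2(c_1)/(c_1(c_1+1))>0$, which correctly sidesteps the fact that $1-B_1$ may be negative. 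Your closing remark is also apt: a naive cylinderwise removal-ratio bound using only $\sigma_n\ge n$ cannot reach the hypothesis $\sum_k v_k2^{-k}<\infty$ (e.g.\ $v_k\sim 2^k/k^2$), so propagating the full profile $s\mapsto\psi_n(s)$ with the ansatz $B_n/(s+1)$ is genuinely needed; the factor $\tfrac12$ in the telescoping estimate is precisely what produces the weight $2^{-k}$ in the statement.
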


\begin{corollary}
If $V_k=  V = \{ v+1, v+2, ...\}$,  then  $\lambda(\Cset{V_n}) >0.$
\end{corollary}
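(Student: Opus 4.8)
The plan is to obtain the corollary as an immediate specialization of Theorem~\ref{v+1, v+2, ...}. The corollary is precisely the case of that theorem in which the sequence $(v_k)$ is constant, $v_k = v$ for all $k \in \mathbb{N}$, so that $V_k = \set{v+1, v+2, \dots} = V$ for every $k$. The sole hypothesis to be checked is the convergence of the series $\sum_{k=1}^\infty v_k 2^{-k}$; with $v_k \equiv v$ this is the geometric series
\[
\sum_{k=1}^\infty \frac{v}{2^k} = v\sum_{k=1}^\infty 2^{-k} = v < +\infty .
\]
Therefore Theorem~\ref{v+1, v+2, ...} applies and gives $\lambda(\Cset{V_n}) > 0$, which is exactly the claim.

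Should one wish to avoid quoting the full theorem, the same conclusion follows from a direct estimate along the lines of its proof: one keeps, at level $n$, exactly the $\bO1$-cylinders determined by admissible $n$-tuples of symbols from $V = \set{v+1, v+2, \dots}$, and bounds the proportion of each surviving level-$n$ cylinder that is discarded at level $n+1$. The resulting survival ratios differ from $1$ by quantities of order $v\,2^{-n}$, whose sum converges; hence the infinite product of these ratios is strictly positive, and so is the Lebesgue measure of the limit set. There is thus no genuine obstacle in the corollary itself — it is recorded for emphasis, to underline the sharp contrast with the continued fraction expansion, for which forbidding a single digit in all positions already forces zero Lebesgue measure, whereas for the $\bO1$-expansion forbidding any fixed finite initial block $\set{1,2,\dots,v}$ of symbols in every position still leaves a set of positive measure.
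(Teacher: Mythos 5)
Your proof is correct and matches the paper's (implicit) argument exactly: the corollary is the special case $v_k \equiv v$ of Theorem~\ref{v+1, v+2, ...}, and the hypothesis $\sum_k v\,2^{-k} = v < \infty$ is trivially satisfied. The paper offers no separate proof precisely because this one-line specialization suffices.
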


In the case of zero Lebesgue measure, the next level of studying of properties of sets $\Cset{V_n}$ is the determination
of their Hausdorff dimension  $\dim_H(\cdot)$ (see, e.g., \cite{Fa} for the definition and main properties of this main fractal dimension).


We shall study this problem for the case where $V_n = \{1,2,..., k_n\}$. A similar problem for the continued fraction expansion were studied by many authors during last  60 years.
Set
 $$E_2 = \{x: x = \Delta^{c.f.}_{\alpha_1(x)...\alpha_k(x)...~} ,\alpha_k(x) \in \{1,2\} \}.$$

In  1941   Good \cite{Good} shows that  $$0,5194 < \dim_H (E_2)
<0, 5433.$$
In 1982 and  1985 Bumby \cite{Bumby82, Bumby85} improves these bounds:
$$0,5312 < \dim_H (E_2) < 0,5314.$$
 In  1989 Hensley \cite{Hensley89}  shows that
 $$0,53128049 < \dim_H (E_2) < 0,53128051.$$
In 1996 the same author  (\cite{Hensley96}) improves his estimate up to $$0,5312805062772051416.$$
New approach to the determination of the Hausdorff dimension of the set $E_2$  with a desired precision was developed by Jenkinson and Policott in 2001 \cite{Jenkinson}.

Our nearest aim is to study fractal properties of sets which are $\bO1$-analogues of the above discussed set $E_2$, i.e., the set
$$
\bO1_2 = \{x: x= \bO1(g_1(x) g_2(x)...g_k(x)...), g_{k}(x) \in
\{1,2\}\}
$$ and their generalization $$
\bO1_n = \{x: x= \bO1(g_1(x) g_2(x)...g_k(x)...), g_{k}(x) \in
\{1,2,..., n\}\}.
$$
Firstly, let us mention, that from Theorem \ref{1,2,...n} it follows that all these sets are of zero Lebesgue measure, which is similar to the c.f.-case.
But the following theorem shows that from the fractal geometry point of view  the sets $E_2$ and $\bO1_2$ (as well as their generalizations) are cardinally different.

\begin{theorem}\label{theorem pro rozmirnist HB dlya ryadiv Ostrogr}
For any $n \in \mathbb{N}$ the Hausdorff dimension of the set
$\bO1_n$ is equal to zero.
\end{theorem}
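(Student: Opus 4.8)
The key observation is that the $\bO1$-expansion converges extremely fast even when all symbols are bounded, so the cylinder sets shrink super-exponentially and a crude covering argument already gives dimension zero. Fix $n$, so every admissible symbol $g_k(x)$ lies in $\{1,2,\dots,n\}$. The plan is to estimate the diameter of a rank-$k$ cylinder $\Delta_{g_1g_2\dots g_k}$ (the set of points of $\bO1_n$ with prescribed first $k$ symbols). From~\eqref{eq:bo1series} the $k$-th term of the series has denominator $g_1(g_1+g_2)\cdots(g_1+g_2+\dots+g_k)$, and since each partial sum $g_1+g_2+\dots+g_j\ge j$, this denominator is at least $k!$. More importantly, the \emph{tail} contributed by symbols after rank $k$ is controlled by the same product, so $\diam \Delta_{g_1\dots g_k}\le \frac{1}{g_1(g_1+g_2)\cdots(g_1+\dots+g_k)}\le \frac{1}{k!}$, uniformly over all admissible symbol strings.

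First I would make the cylinder-diameter estimate precise: with all $g_j\le n$ one has $g_1+g_2+\dots+g_j\le nj$, but for the upper bound on the diameter only the lower bound $g_1+\dots+g_j\ge j$ is needed, giving $\diam\Delta_{g_1\dots g_k}\le 1/k!$. Next, count the cylinders: there are at most $n^k$ admissible strings of length $k$, hence $\bO1_n$ is covered by at most $n^k$ sets each of diameter at most $1/k!$. Then for any $s>0$ the $s$-dimensional Hausdorff pre-measure at scale $1/k!$ is bounded by
\[
\mathcal{H}^s_{1/k!}(\bO1_n)\le n^k\cdot\left(\frac{1}{k!}\right)^s=\frac{n^k}{(k!)^s}.
\]
Since $k!$ grows faster than $n^k$ for every fixed $n$, this tends to $0$ as $k\to\infty$ for \emph{every} $s>0$. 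Hence $\mathcal{H}^s(\bO1_n)=0$ for all $s>0$, which forces $\dim_H(\bO1_n)=0$.

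The only point requiring a little care — and the place I expect the main (modest) obstacle — is justifying that the rank-$k$ cylinder of $\bO1_n$ really is contained in an interval of length at most $1/k!$, i.e. controlling the alternating tail $\sum_{j>k}(-1)^{j-1}\big/\big(g_1(g_1+g_2)\cdots(g_1+\dots+g_j)\big)$ and the fact that the remaining symbols still range only over $\{1,\dots,n\}$. Because the series is alternating with strictly decreasing terms (the denominators are strictly increasing), the tail starting after rank $k$ lies between $0$ and its first term $\big(g_1(g_1+g_2)\cdots(g_1+\dots+g_{k+1})\big)^{-1}$, which is itself at most $\big(g_1\cdots(g_1+\dots+g_k)\big)^{-1}$; this gives the diameter bound after also accounting for the two boundary values produced by the closure operation in the definition of $\Cset{V_n}$. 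Once this geometric estimate is in hand, the covering computation above is entirely routine and the theorem follows; note that in fact the same argument shows $\mathcal{H}^s=0$ for every $s>0$ simultaneously, a much stronger statement than mere vanishing of the dimension.
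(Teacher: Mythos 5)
Your proposal is correct and follows essentially the same route as the paper: cover $\bO1_n$ by the $n^k$ rank-$k$ cylinders, bound each cylinder's diameter by a quantity of order $1/k!$ (the paper quotes the exact cylinder length $\frac{1}{\sigma_1\cdots\sigma_k(\sigma_k+1)}\le\frac{1}{k!(k+1)}$ from the earlier literature, while you rederive a slightly weaker but equally sufficient bound from the alternating-series tail estimate), and observe that $n^k/(k!)^s\to 0$ for every $s>0$. No substantive difference.
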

\begin{proof}
Let $\bcyl1{c_1c_2\dots c_k}$ be the cylindrical interval of the $\bO1$-expansion, i.e., the closure of all real numbers $x$ from the unit interval such that $g_i(x)=c_i, i=1,2,...,k$. It is known (\cite{ABPT}) that  $\abs{\bcyl1{c_1c_2\dots c_k}} = \frac{1}{\sigma_1
\sigma_2 ... \sigma_k (\sigma_k + 1)}$,  where $\sigma_j = c_1 + c_2 +
... + c_j$. Therefore,  $\abs{\bcyl1{c_1c_2\dots c_k}} \leq \abs{\bcyl1{1
1 \dots 1}} = \frac{1}{k!(k+1)}$.

Let us fix a positive real number $\alpha$. It is clear that the set $\bO1_n$  is contained in the union of the following cylinders:
 $$
 \bO1_n \subset \bigcup_{i_1 = 1}^{n} \bigcup_{i_2 = 1}^{n} ... \bigcup_{i_k = 1}^{n} \bcyl1{c_1 c_2\dots c_k}, ~~~ \forall k \in \N,
  $$
which forms it's  $\varepsilon_k = \frac{1}{k!(k+1)}$-covering.
The $\alpha$-volume of this covering is equal to  $n^k \cdot \left(\frac{1}{k!
(k+1)}\right)^{\alpha}$.  So, the Hausdorff pre-measure
$$ H^{\alpha}_{\varepsilon_k}(\bO1_n) := \inf\limits_{|E_i|\leq \varepsilon_k} \sum\limits_{i}|E_i|^{\alpha} \leq n^k \cdot \left(\frac{1}{k! (k+1)}\right)^{\alpha} \to 0 (k \to \infty),~~ \forall \alpha>0.$$
Therefore, $H^{\alpha}_{\varepsilon_k} (\bO1_n) =0, ~\forall k \in \mathbb{N}, ~ \forall \alpha>0$.

 So, $H^{\alpha}(\bO1_n) = \lim\limits_{k\to \infty}H^{\alpha}_{\varepsilon_k} (\bO1_n) =0, ~ \forall \alpha>0$ and, hence, $$\dim_H (\bO1_n):= \inf \{ \alpha: H^{\alpha} (\bO1_n) =0 \} =0,$$ which proves the theorem.
\end{proof}

Let  $B(\bO1)$ be the set of all real numbers from the unit interval with bounded  $\bO1$-symbols (i.e., $x\in B(\bO1)$ iff there
exists a positive integer $K_x$ (depending on $x$) such that $g_k(x)\leq K_x$ for all $k\in \N$).

\textbf{Corollary 1.} The set $B(\bO1)$ with bounded  $\bO1$-symbols is an anomalously fractal set, i.e.,
$$\dim_H (B(\bO1)) =0.$$

\textbf{Corollary 2.} For all but of the Hausdorff dimension zero real numbers  $x \in [0,1]$ the sequences  $\{g_k(x)\}$ of their  $\bO1$-symbols are unbounded.

\textbf{Remark.} The set of real numbers with bounded continued fraction symbols is of full Hausdorff dimension ($\dim_H (B(c.f.)) = 1$), which stresses essential differences also  in dimensional theories of the Ostrogradsky-Sierpi\'nski-Pierce and continued fraction expansions.

\section{Properties of the symbolic dynamical system generated by the Ostrogradsky-Sierpi\'nski-Pierce expansion}

 Let us consider a dynamical system which is  generated by one-sided shift transformation  $T$ on the $\bO1$-expansion:
$$
 \forall ~ x = \bO1(g_1(x), g_2(x), \dots, g_n(x), \dots)~~ \in [0,1],
 $$
  $$T(x) = T(\bO1(g_1(x), g_2(x),\dots, g_n(x),\dots))= \bO1(g_2(x), g_3(x),\dots, g_n(x),\dots).
$$

 Recall that a set  $A$ is said to be invariant w.r.t. a measurable transformation  $T,$ if $A=T^{-1}A.$
  A measure $\mu$ is said to be ergodic w.r.t. a transformation  $T,$ if any invariant set  $A\in\mathfrak{B}$ is either of full or of zero measure $\mu$. A measure $\mu$ is said to be invariant w.r.t. a transformation  $T,$ if for any set  $E\in\mathfrak{B}$ one has
  $\mu(T^{-1}E)=\mu(E)$.

Let us remind that to develop metric and ergodic theories of any expansion it would very desirable to have a measure which is T-invariant, T-ergodic and absolutely continuous w.r.t. the Lebesgue measure (i.e., to find an analogue of the Gauss measure for c.f.-expansion). Unfortunately, the following  theorem shows that the above mentioned ergodic approach is not applicable for the Ostrogradsky-Sierpi\'nski-Pierce expansion.

\begin{theorem}
  There are no probability measures which are  simultaneously invariant and ergodic w.r.t. the  one-sided shift transformation  $T$ on $\bO1$-expansion, and  absolutely  continuous w.r.t. the Lebesgue measure.
\end{theorem}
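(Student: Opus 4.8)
The plan is to reduce the theorem to the metric fact announced in the abstract (and established in the metric part of the paper): for Lebesgue-almost every $x\in[0,1]$ and every $i\in\N$, the digit $i$ occurs only finitely often among the $\bO1$-symbols $(g_n(x))_{n\ge1}$; in particular, with $N_i(x,n)$ the counting function from the Introduction, $\frac1n N_i(x,n)\to0$ for $\lambda$-almost every $x$. Everything else is soft ergodic theory. First I would record two elementary facts about the first-level cylinders $\bcyl1{i}=\set{x\in(0,1]:g_1(x)=i}$, which, up to a countable set, are intervals of length $\abs{\bcyl1{i}}=\frac1{i(i+1)}$ (the case $k=1$ of the cylinder-length formula cited above): (i) $\set{\bcyl1{i}}_{i\in\N}$ covers $(0,1]$, and any two of its members meet only inside the countable set $\set{1/i:i\in\N}$; (ii) since $T^{j}x=\bO1(g_{j+1}(x),g_{j+2}(x),\dots)$, one has $\mathbf{1}_{\bcyl1{i}}(T^{j}x)=1$ exactly when $g_{j+1}(x)=i$, so that $\sum_{j=0}^{n-1}\mathbf{1}_{\bcyl1{i}}(T^{j}x)=N_i(x,n)$ off a $\lambda$-null set of ambiguous points.

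Now suppose, for contradiction, that $\mu$ is a probability measure which is $T$-invariant, $T$-ergodic and absolutely continuous w.r.t.\ $\lambda$. Fix $i\in\N$. Applying Birkhoff's ergodic theorem to $\mathbf{1}_{\bcyl1{i}}\in L^1(\mu)$ and invoking ergodicity, for $\mu$-almost every $x$
\[
\frac{N_i(x,n)}{n}=\frac1n\sum_{j=0}^{n-1}\mathbf{1}_{\bcyl1{i}}(T^{j}x)\ \xrightarrow[n\to\infty]{}\ \mu\big(\bcyl1{i}\big).
\]
On the other hand the metric fact holds $\lambda$-almost everywhere, hence — because $\mu\ll\lambda$ — also $\mu$-almost everywhere; thus for $\mu$-almost every $x$ the sequence $N_i(x,n)$ is eventually constant and $\frac1n N_i(x,n)\to0$. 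Comparing the two limits forces $\mu(\bcyl1{i})=0$. Since $i\in\N$ was arbitrary, and the $\bcyl1{i}$ cover $(0,1]$ with $\mu$-null pairwise overlaps (a countable set is $\lambda$-null, hence $\mu$-null), we obtain $1=\mu([0,1])=\mu((0,1])=\sum_{i\in\N}\mu(\bcyl1{i})=0$, a contradiction. Hence no such $\mu$ exists.

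The whole substance of the argument is the cited metric theorem; granted it, the reduction above is essentially a three-line computation, so there is no genuine ``hard step'' inside the present proof. Two remarks are in order. First, ergodicity is not actually needed: one may argue instead from the Poincar\'e recurrence theorem — if $\mu(\bcyl1{i})>0$, then $\mu$-almost every $x\in\bcyl1{i}$ returns to $\bcyl1{i}$ infinitely often, i.e.\ satisfies $g_n(x)=i$ for infinitely many $n$, which (via $\mu\ll\lambda$) contradicts the metric theorem; so again $\mu(\bcyl1{i})=0$ for all $i$, and $\mu$ cannot be a probability measure. This even yields the stronger statement that there is no $T$-invariant probability measure absolutely continuous w.r.t.\ $\lambda$ at all. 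Second, if one insisted on a proof not passing through the metric theorem, the obstruction would be that $T$ is very far from a piecewise-smooth, uniformly expanding map with bounded distortion (it is not even piecewise-M\"obius, unlike the Gauss map): from $\abs{\bcyl1{c_1\dots c_k}}=(\sigma_1\cdots\sigma_k(\sigma_k+1))^{-1}$ one checks that the mean expansion ratio of $T$ on the subcylinders of a fixed $\bcyl1{i}$ ranges asymptotically from $\sim i$ (for $\bcyl1{ic}$ with $c\to\infty$) up to $\sim i^{3}/2$ (for $\bcyl1{i1}$), so the standard R\'enyi/folklore construction of a Gauss-type invariant density is unavailable — which is exactly the pathology the theorem records.
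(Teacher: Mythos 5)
Your proof is correct and takes essentially the same route as the paper's: apply Birkhoff's ergodic theorem to the indicator of a first-level cylinder and derive a contradiction with the fact that every digit has asymptotic frequency zero for Lebesgue-almost every $x$. The only substantive difference is how that metric input is obtained --- the paper proves it as a lemma inside the proof (a zero--one law combined with the positivity of the Lebesgue measure of the set of reals all of whose $\bO1$-symbols exceed $i$), whereas you cite the stronger ``each digit occurs only finitely often'' theorem established elsewhere in the paper by Borel--Cantelli; your added observation that Poincar\'e recurrence makes ergodicity dispensable is a genuine small strengthening of the paper's concluding remark.
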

\begin{proof}
Firstly we prove the lemma  characterizing generic properties of asymptotic frequencies of digits (from the alphabet) in the Ostrogradsky-Sierpi\'nski-Pierce expansion of real numbers.
\begin{lemma} \label{theorem  pro rivnis chastot 0 dlya O1}
Let $\nu_i (x, \bO1)$ be the asymptotic frequency of a symbol $i$  in the $\bO1$-expansion of $x$ (if the limit $\lim\limits_{k \to \infty} \frac{N_{i}(x,k)}{k}$ exists). Then for Lebesgue almost all real numbers $x \in [0,1]$ and for any  symbol  $i \in \mathbb{N}$ the asymptotic frequency  $\nu_i (x, \bO1)$ is equal to zero.
\end{lemma}
\begin{proof}
Let $x$ be the random variable which is uniformly distributed on the unit interval, i.e., the Lebesgue measure coincides with the probability measure $\mu_{x}$. Let $i$ be a given positive integer, and let us consider the following sequence of random variables:
$$\xi_{k}= \xi_{k}(x) = 0,~~~ \mbox{if}~~~ g_{k}(x) \neq i;$$
$$\xi_{k}= \xi_{k}(x) = 1,~~~ \mbox{if}~~~ g_{k}(x) = i.$$

It is clear that $N_{i}(x,k) = \xi_{1}(x) + \xi_{2}(x)+ ... + \xi_{k}(x)$. Let $$G_i =\{x: \lim\limits_{k \to \infty} \frac{N_{i}(x,k)}{k} = 0\}. $$
The event  $x \in G_i $ does not depend on any finite number of $\bO1$-symbols of  $x$. Therefore,  either $\mu_{x}(G_i)=0$ or $\mu_{x}(G_i)=1$.

  Fix $V_{n}= V=\{ i+1, i+2,... \}$. If $x \in \Cset{V_n} $,  then $N_{i}(x,k) = 0, \forall k \in \mathbb{N}$.  Therefore,  $\Cset{V_n} \subset G_i$. From the corollary of Theorem \ref{v+1, v+2, ...} it follows directly that  $\lambda(\Cset{V_n}) >0$. So,  $\mu_{x}(G_1) = \lambda(G_1)=1$.
\end{proof}

To prove the theorem ad absurdum, let us assume that there exists an absolutely continuous probability measure $\nu$, which is invariant and ergodic w.r.t. the above defined transformation   $T$.  Then, by Birkhoff ergodic theorem,  for  $\nu$-almost all $x \in [0,1]$
  and for any function $\varphi \in L^1([0,1], d\nu)$ we get:
  $$
 \lim_{n\to \infty} \frac{1}{n} \sum_{j=0}^{n-1} \varphi(T^j(x)) = \int_{0}^{1} \varphi(x) d(\nu(x))= \int_{0}^{1} \varphi(x) f_{\nu}(x) dx,
 $$
 where $f_{\nu}(x)$ is the density of $\nu$.

  Choose $\varphi_{і} (x) = 1$, if $x\in \bO1_{[i]}$, and $\varphi_i (x)= 0$ otherwise. Then $$\int_{0}^{1} \varphi_i(x) f_{\nu}(x) dx = \int_{\bO1_{[i]}} f_{\nu}(x) dx >0 ~~\mbox{for at least one} ~~i \in \mathbb{N}.$$
 Let the latter condition holds for the index $i_{0}$.

 On the other hand, from the above Lemma it follows that   $$\lim_{n \to \infty} \frac{1}{n} \sum_{j=0}^{n-1} \varphi_{i_0}(T^j(x)) = \lim_{n \to \infty}\frac{N_{i_0}(x,n)}{n} = 0$$
  for  $\lambda$-almost all $x \in [0,1]$.

 Hence, $$\lim_{n \to \infty}\frac{N_{i_0}(x,n)}{n} = 0$$
  for  $\lambda$-almost all $x \in [0,1]$,  and simultaneously
  $$\lim_{n \to \infty}\frac{N_{i_0}(x,n)}{n} > 0$$ for the set of positive Lebesgue measure. This contradiction proves the theorem. \end{proof}
  \textbf{Remark.} From the proof given above it follows that there are no probability measures which are  simultaneously invariant and ergodic w.r.t. the  one-sided shift transformation  $T$ on $\bO1$-expansion, and   which contains an absolutely  continuous component in its Lebesgue decomposition.

This result can be naturally applied to study the Lebesgue structure of  the random Ostrogradsky-Sierpi\'nski-Pierce expansion, i.e., the random variable
\begin{equation}\label{eq:eta}
\eta=\sum_{k=1}^\infty
\frac{(-1)^{k-1}}{\eta_1(\eta_1+\eta_2)\dots(\eta_1+\eta_2+\dots+\eta_k)}
=\bO1(\eta_1,\eta_2,\dots,\eta_k,\dots),
\end{equation}
whose $\bO1$-symbols $\eta_k$ are  independent identically distributed random variables taking values  $1$, $2$,~$\dots$, $m$,~$\dots$ with probabilities  $p_{1}$, $p_{2}$,~$\dots$, $p_{m}$,~$\dots$
respectively, i.e.,
\[
P\set{\eta_k=m}=p_{m} \quad \text{with} \quad p_{m}\geq0, \quad
\sum\limits_{m=1}^\infty {p_{m}}=1 \quad \forall k \in \mathbb{N}.
\]


\begin{theorem}
Let  $\{\eta_k\}$  be  a sequence of independent identically distributed random variables taking values  $1$, $2$,~$\dots$, $m$,~$\dots$ with probabilities  $p_{1}$, $p_{2}$,~$\dots$, $p_{m}$,~$\dots$
respectively. Then the random variable $\eta$ defined by (\ref{eq:eta}) has either:

1) degenerate distribution (if $p_i =1$ for some $i \in \mathbb{N}$);

2) or pure singularly continuous distribution  (in all other cases).
\end{theorem}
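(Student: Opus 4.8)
The plan is to split into the two announced cases and, in the non-degenerate case, to establish continuity and singularity separately. First I would dispose of the degenerate case: if $p_i=1$ for some $i$, then with probability one $\eta_k=i$ for all $k$, so $\eta=\bO1(i,i,\dots)$ is a single point, and the distribution is concentrated there. For the rest of the proof assume that at least two of the $p_m$ are strictly positive.

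Next I would prove that $\eta$ has no atoms (so its distribution, being a Borel probability measure on $[0,1]$ with no atoms, is continuous). The natural approach is: a point $x$ with $\bO1$-expansion $(c_1,c_2,\dots)$ satisfies $P\{\eta=x\}=\prod_{k=1}^\infty p_{c_k}$ by independence (here I use that for an irrational $x$ the expansion is unique, and that the countably many rationals each get probability zero once we know the non-rational part is atomless, so it suffices to bound $\prod p_{c_k}$). Since not all mass sits on one symbol, $\sup_m p_m =: p^* <1$, hence $\prod_{k=1}^\infty p_{c_k}\le \lim_{k}(p^*)^k=0$ for every fixed sequence $(c_k)$; thus every single point has $\eta$-measure zero and $\eta$ is continuous. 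One should also remark that $\eta$ cannot be a mixture with a nontrivial discrete part for the same reason.

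Now I would show that $\eta$ cannot be absolutely continuous, which combined with the Jessen–Wintner / "pure type" alternative for distributions of series with independent terms forces pure singular continuity. The clean way is to invoke the machinery already built in the paper. Since $\eta$ is, up to a change of probability space, nothing but the pushforward of the i.i.d. product measure on symbol space under the coding map, and since the $\bO1$-symbols of $\eta$ are i.i.d., the event $\{\eta \text{ has a given }\bO1\text{-symbol frequency}\}$ is a tail event; by the strong law of large numbers the symbol $i$ appears with asymptotic frequency $p_i>0$ for at least one $i$, with $P$-probability one. On the other hand, by Lemma~\ref{theorem pro rivnis chastot 0 dlya O1}, for Lebesgue-almost every $x\in[0,1]$ the frequency $\nu_i(x,\bO1)$ equals zero for every $i$. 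Hence the set $S=\{x:\nu_i(x,\bO1)=p_i>0\text{ for some }i\}$ has $\eta$-measure one and Lebesgue measure zero, so the distribution of $\eta$ is concentrated on a Lebesgue-null set; it cannot be absolutely continuous. Therefore $\eta$ is continuous but not absolutely continuous, and — being the distribution of a convergent series of independent terms, hence of pure type by the Jessen–Wintner theorem (it is not of the discrete pure type by the previous paragraph) — it is purely singularly continuous, which is exactly case~2.

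The main obstacle I anticipate is the atomlessness step in the case where $p^*=1$ is "approached but not attained", say $p_m\to 0$ but $\sup_m p_m$ is attained with value $<1$ — that case is actually fine — or more delicately, the case where the supremum of the $p_m$ is not attained; one must be slightly careful that $\prod p_{c_k}$ still tends to $0$ along every admissible sequence, but since $\sum p_m=1$ and infinitely many $p_m>0$, one always has $p_m<1$ for each $m$ and a uniform bound $\sup_m p_m<1$ unless a single $p_i=1$, which is the degenerate case. The other point requiring care is the justification that the distribution of a series with independent (here merely identically distributed) terms is of pure type; this is standard (Jessen–Wintner / Lévy), but I would state it explicitly as the structural fact that removes the possibility of a genuine mixture.
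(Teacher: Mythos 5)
Your proof is correct and follows essentially the same route as the paper: the degenerate versus continuous dichotomy via the product $\prod_k p_{c_k}$ (the paper cites the discreteness criterion $\prod_k\max_i p_{ik}>0$ from \cite{ABPT}, you prove the same bound directly), and singularity by opposing the set where some symbol $i_0$ with $p_{i_0}>0$ occurs with asymptotic frequency $p_{i_0}$ (full $\mu_{\eta}$-measure by the strong law of large numbers) to the set where all frequencies vanish (full Lebesgue measure by the paper's lemma on asymptotic frequencies). One small caveat: your appeal to the Jessen--Wintner pure-type theorem is both unnecessary and, as stated, inapplicable, since the terms of the series defining $\eta$ are not independent (the $k$-th term involves $\eta_1,\dots,\eta_k$); but this costs nothing, because atomlessness together with $\mu_{\eta}(S)=1$ for a Lebesgue-null set $S$ already annihilates both the discrete and the absolutely continuous parts in the Lebesgue decomposition, leaving a purely singularly continuous measure.
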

\begin{proof}
1) The correctness of the first assertion follows directly from the necessary and sufficient condition for discreteness of $\eta$ in general independent case (see, e.g., \cite{ABPT}): random variable  $\eta$ is purely discretely distributed if and only if  $\prod\limits_{k=1}^{\infty} \max\limits_i p_{ik} >0$.

2) Let us prove that in the case of continuity the distribution of $\eta$ does not contain absolutely continuous component.  To this end we need an auxiliary lemma.
 \begin{lemma} If $\{\eta_k\}$ are independent and identically distributed random variables, then the measure $\mu_{\eta}$  is invariant and ergodic w.r.t. the one-sided shift transformation  $T$.
  \end{lemma}
 \begin{proof}
  1) Let  $A$ be an invariant set w.r.t. $T$. Then $T(T^{-1}A)= T(A)$ and, so, $A=TA$.
  Therefore $A=T^{-1}A=T^{-1}(TA)$.

  If $x =\bO1(g_1(x) g_2(x)... g_k(x)...)$ and  $x \in A$, then $$T^{-1}(T x) = \{ x: x= \bO1(c_1 g_2(x)... g_k(x)...), c_1 \in N\} \subset A.$$

Therefore the event $\{x \in A\}$ does not depend on the first  $\bO1$-symbol of the point $x$. Similarly one can show that this event does not
 depend on the initial $n$   $\bO1$-symbols of  $x$. Then, from Kolmogorov's "zero and one" law it follows that either
  $\mu_{\eta}(A)=0$ or $\mu_{\eta}(A)=1$. So, $\mu_{\eta}$ is ergodic w.r.t.  $T$.

2) Since the Borel  $\sigma$-algebra $\mathcal{B}$ is generated by the family of  $\bO1$-cylinders, i.e., sets of the form $\bO1_{[c_1 c_2 ...c_n]}$, it is sufficient to show that the measure $\mu_{\eta}$ is invariant on these cylinders (\cite{Bil}). It is clear that  $\mu_{\eta}(\bO1_{[c_1 c_2 ...c_n]})=
p_{c_1}\cdot p_{c_2} \cdot ... \cdot p_{c_n}$. Since
$T^{-1} (\bO1_{[c_1 c_2 ...c_n]}) = \bO1_{[i c_1 c_2 ...c_n]}, i \in \mathbb{N},$
 we have$$\mu_{\eta}(T^{-1} (\bO1_{[c_1 c_2 ...c_n]})) =  \sum\limits_{i=1}^{\infty} \mu_{\eta}(\bO1_{[ i c_1 c_2 ...c_n]}) =$$ $$ =p_{c_1}\cdot p_{c_2} \cdot ... \cdot p_{c_n} \sum\limits_{i=1}^{\infty} p_i = p_{c_1}\cdot p_{c_2} \cdot ... \cdot p_{c_n} = \mu_{\eta}(\bO1_{[c_1 c_2 ...c_n]}), $$ which proves the lemma.
 \end{proof}

 Let us choose a positive integer  $i_0$ such that $p_{i_0} >0$ and consider the set $M_{i_0} = \{x: x \in [0,1], \nu_i(x, \bO1) = p_{i_0}>0 \}$. Since symbols of $\bO1$-expansion are independent w.r.t. the measure $\mu_{\eta}$, from the strong law of large number it follows that  this set is of full $\mu_{\eta}$-measure.

Let us now consider the set  $L^{*}_{i_0} = \{x: x \in [0,1], ~ \nu_{i_0}(x, \bO1) = 0 \}$.  From Lemma \ref{theorem  pro rivnis chastot 0 dlya O1} it follows directly that $ \lambda(L^{*}_{i_0}) =1$. The sets   $ M_{i_0}$ and $L^{*}_{i_0}$ have no mutual intersection. The first one is a support of the probability measure $\mu_{\eta}$, and the second one is a support of Lebesgue measure on the unit interval. So, $\mu_{\eta} \bot \lambda$, which completes the proof of the theorem.
\end{proof}

\textbf{Corollary.} The random variable  $\eta$ with independent identically distributed increments of the Ostrogradsky-Sierpi\'nski-Pierce expansion has a pure distribution, and it is can not be absolutely continuous.

\section{On normal properties of reals in $\bO1$ - expansion and singularity of  random Ostrogradsky-Sierpi\'nski-Pierce expansions in general independent case}

A property "$\Upsilon$" of real numbers is said to be normal, if it holds for almost all (in the sense of the Lebesgue measure) real numbers.
Typical normal properties are  "to be irrational", "to be transcendental" which do not depend on a chosen system of numeration (expansion). Having a fixed expansion, it is convenient to formulate normal properties via properties of symbols (digits) of this expansion.
For instance, for the classical decimal expansion the following properties are normal: "to have infinitely many zeroes (in the expansion)", " does not contain any period", "to contain any digit from the alphabet with the asymptotic frequency $\frac{1}{10}$".
For the continued fractions expansion as an example of typical normal property one may consider  "to contain a symbol $i$ from the alphabet with the asymptotic frequency $\frac{1}{\ln 2} \ln \frac{(i+1)^2}{i(i+2)}$" (see, e.g., \cite{Bil} for details and other examples).
The investigation of normal properties of real numbers written in some expansion is an important part in the development of the metric theory of the corresponding expansion, because to determinate Lebesgue measure (or any other equivalent measures) of a given subset, one may ignore  real numbers loosing normal properties. They are also extremely helpful for the studying of properties of probability distributions connected to the corresponding expansion.

In the initial sections of our paper we already derived two normal  properties of real numbers written via $\bO1$ - expansion:

1) for Lebesgue almost all  real numbers  $x \in [0,1]$ the sequences  $\{g_k(x)\}$ of their  $\bO1$-symbols are unbounded;

2) for Lebesgue almost all real numbers $x \in [0,1]$ and for any  symbol  $i \in \mathbb{N}$ the asymptotic frequency  $\nu_i (x, \bO1)$ is equal to zero.

The following theorem gives us rather unusual property of the $\bO1$-expansion and it can be considered as an essential strengthening of the latter property.
\begin{theorem} \label{theorem  pro skinchennist povtoriv synvoliv v rozkladi Ostr-Pierce}
For Lebesgue almost all real numbers $x \in [0,1]$ and for any  symbol  $i \in \mathbb{N}$ one has:
$$
\limsup_{n \to \infty} N_i(x,n) < +\infty,
$$
i.e., in the $\bO1$- expansion of  almost all real numbers any digit $i$ from the  alphabet $A= \mathbb{N} $ appears  only finitely many times!
\end{theorem}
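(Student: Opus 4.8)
The plan is to reduce the assertion to the first Borel--Cantelli lemma. Regard $x$ as uniformly distributed on $[0,1]$, so its law is Lebesgue measure $\lambda$; write $\mathcal F_k$ for the $\sigma$-algebra generated by $g_1,\dots,g_k$, let $\mathbb E$ and $\mathbb P(\,\cdot\mid\mathcal F_k)$ denote expectation and conditional probability with respect to $\lambda$, and put $\sigma_k=\sigma_k(x)=g_1(x)+\dots+g_k(x)$. Fix a digit $i\in\mathbb N$. Since $n\mapsto N_i(x,n)$ is non-decreasing, $\limsup_n N_i(x,n)<+\infty$ is equivalent to the digit $i$ occurring only finitely often in $\bO1(g_1(x),g_2(x),\dots)$; so if $\sum_{k=1}^\infty\lambda\{x:g_k(x)=i\}<\infty$, the first Borel--Cantelli lemma gives this for $\lambda$-a.e.\ $x$, and intersecting the resulting countably many full-measure sets over $i\in\mathbb N$ proves the theorem. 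Everything thus reduces to the summability of $\lambda\{g_k=i\}$.

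To estimate $\lambda\{g_k=i\}$ I would use the cylinder-length formula $\abs{\bcyl1{c_1c_2\dots c_k}}=\frac1{\sigma_1\sigma_2\cdots\sigma_k(\sigma_k+1)}$, $\sigma_j:=c_1+\dots+c_j$ (see \cite{ABPT} and the proof of Theorem~\ref{theorem pro rozmirnist HB dlya ryadiv Ostrogr}); as $\bO1$-cylinders are intervals, this is also their $\lambda$-measure. Taking the ratio of the lengths of $\bcyl1{c_1\dots c_k j}$ and $\bcyl1{c_1\dots c_k}$ yields
$$\mathbb P(g_{k+1}=j\mid\mathcal F_k)=\frac{\sigma_k+1}{(\sigma_k+j)(\sigma_k+j+1)},\qquad j\in\mathbb N,$$
a function of $\mathcal F_k$ through $\sigma_k$ alone (summing to $1$ over $j$, as it should). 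Since $(\sigma_k+i)(\sigma_k+i+1)\ge(\sigma_k+1)(\sigma_k+2)$ for $i\ge1$, this gives $\mathbb P(g_{k+1}=i\mid\mathcal F_k)\le\frac1{\sigma_k+2}\le\frac1{\sigma_k}$, hence $\lambda\{g_{k+1}=i\}=\mathbb E[\mathbb P(g_{k+1}=i\mid\mathcal F_k)]\le\mathbb E[1/\sigma_k]$. It therefore suffices to prove $\sum_k\mathbb E[1/\sigma_k]<\infty$.

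The heart of the argument --- and the step I expect to be the main obstacle --- is to show that $\sigma_k$ grows geometrically fast in mean. From the conditional law above, $\sigma_{k+1}=\sigma_k+g_{k+1}$ has $\mathbb P(\sigma_{k+1}=m\mid\mathcal F_k)=\frac{\sigma_k+1}{m(m+1)}$ for integers $m>\sigma_k$, again a function of $\sigma_k$ only, so
$$\mathbb E\!\left[\frac{\sigma_k}{\sigma_{k+1}}\;\Big|\;\mathcal F_k\right]=\sigma_k(\sigma_k+1)\sum_{m>\sigma_k}\frac1{m^2(m+1)}\le\sigma_k(\sigma_k+1)\sum_{m>\sigma_k}\frac1{m^3}\le\frac{\sigma_k+1}{2\sigma_k}.$$
Because every $\bO1$-symbol is $\ge1$ we have $\sigma_k\ge k$, hence $\sigma_k\ge2$ for $k\ge2$, and then the last bound is $\le\frac34$. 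As $\sigma_k$ is $\mathcal F_k$-measurable, this gives $\mathbb E[1/\sigma_{k+1}]\le\tfrac34\,\mathbb E[1/\sigma_k]$ for $k\ge2$, so $\mathbb E[1/\sigma_k]\le(3/4)^{k-2}\,\mathbb E[1/\sigma_2]\le\tfrac12(3/4)^{k-2}$; together with $\mathbb E[1/\sigma_1]=\sum_{c\ge1}\frac1{c^2(c+1)}<\infty$ this yields $\sum_k\mathbb E[1/\sigma_k]<\infty$. Combined with the previous paragraph this gives $\sum_k\lambda\{g_k=i\}<\infty$ for every $i$, and Borel--Cantelli together with the union over $i\in\mathbb N$ completes the proof. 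The only points requiring genuine care are the verification of the two conditional-probability identities (both immediate from the cylinder-length formula), the monotone-convergence step $\mathbb E\bigl[\sum_k 1/\sigma_k\bigr]=\sum_k\mathbb E[1/\sigma_k]$, and the elementary tail bound $\sum_{m>s}m^{-3}\le\frac1{2s^2}$; everything else is bookkeeping.
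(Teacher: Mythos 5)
Your proposal is correct and follows essentially the same route as the paper: both reduce the theorem to the first Borel--Cantelli lemma applied to the events $\{x: g_k(x)=i\}$ and derive a geometrically decaying bound on $\lambda\{g_k=i\}$ from the cylinder-length formula $\abs{\bcyl1{c_1\dots c_k}}=\frac1{\sigma_1\cdots\sigma_k(\sigma_k+1)}$ together with the observation that the worst case is $i=1$. The only difference is how the geometric decay is obtained --- the paper sums the cylinder lengths exactly to get $\lambda\{g_k=1\}=2^{-k}$, whereas you pass through the contraction $\mathbb{E}[1/\sigma_{k+1}]\le\frac34\,\mathbb{E}[1/\sigma_k]$, which is slightly lossier but equally sufficient.
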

\begin{proof}
Let $\Omega=[0,1], \mathcal{F}=\mathcal{B}$, and let $P=\lambda$ be the Lebesgue measure on the unit interval. For any  $i  \in \mathbb{N}$ and $k \in \mathbb{N}$ set
$$
A_k^i:=\{x: x= \bO1(g_1(x), g_2(x), \dots, g_n(x), \dots);  g_k(x)=i  \} =$$ $$= \bigcup\limits_{c_1 =1}^{\infty}...\bigcup\limits_{c_{k-1}=1}^{\infty} \bO1[c_1 c_2 ...c_{k-1}i],
$$
where $\bO1[c_1 c_2 ...c_{k-1}i]$ is the $\bO1$-cylinder. Since $\abs{\bcyl1{c_1c_2\dots c_k}} = \frac{1}{\sigma_1
\sigma_2 ... \sigma_k (\sigma_k + 1)}$,  where $\sigma_j = c_1 + c_2 +
... + c_j$, we have $\lambda(\bO1[c_1 c_2 ...c_{k-1}i]) \leq \lambda(\bO1[c_1 c_2 ...c_{k-1}1]).$

So,  $$\lambda(A_k^i)\leq \lambda(A_k^1) = \sum\limits_{c_1 =1}^{\infty}...\sum\limits_{c_{k-1}=1}^{\infty} \lambda(\bO1[c_1 c_2 ...c_{k-1}1]) = $$ $$ =\sum\limits_{c_1 =1}^{\infty}...\sum\limits_{c_{k-1}=1}^{\infty} \frac{1}{\sigma_1 \sigma_2 ... \sigma_{k-1} (\sigma_{k-1}+1)(\sigma_{k-1}+2)} = \frac{1}{2^k}. $$

Let  $A^i_{\infty} = \limsup\limits_{k \to \infty} A_k^i $. It is evident that $\sum\limits_{k=1}^{\infty} \lambda(A_k^i) \leq \sum\limits_{k=1}^{\infty} \frac{1}{2^k}=1,$ and, therefore, applying the Borel-Cantelli lemma to the sequence of  events $\{A_k^i\} ( k \in \mathbb{N})$, which are mutually depending w.r.t. the Lebesgue measure, we get $\lambda(A^i)=0$. So, for any symbol  $i \in \mathbb{N}$ and for  $\lambda$-almost all $x \in [0,1]$  the $\bO1$-expansion of $x$ contains only finitely many symbols $"i"$.
\end{proof}

  In the previous section, based on the ergodic approach, we were studying the structure of probability distributions of random variables with independent \textit{identically distributed} $\bO1$-symbols. In this Section,
   we shall study properties of the distribution of the random variable $\eta$ in general independent case, i.e., in the case where $\eta_k$ are independent but, generally speaking,  not identically distributed.

\begin{theorem}\label{theorem pro syngulyarnist' vypadkovyh ryadiv Ostrogradskogo}

Let  $\{\eta_k\}$ be a sequence of independent random variables taking values  $1,2,3,...$ with probabilities $p_{1k}, p_{2k},
p_{3k},... $ correspondingly, $~~(\sum\limits_{i=1}^{\infty} p_{ik} = 1, ~ \forall
k \in \mathbb{N})$.

If there exists a symbol "$i_0$" such that
\begin{equation}\label{umova syngul Ostrogr}
\sum_{k=1}^{\infty} p_{i_{0}k} = +\infty,
\end{equation}
then the random variable $$\eta=\sum_{k=1}^\infty
\frac{(-1)^{k-1}}{\eta_1(\eta_1+\eta_2)\dots(\eta_1+\eta_2+\dots+\eta_k)}
=\bO1(\eta_1,\eta_2,\dots,\eta_k,\dots),
$$ is singularly (w.r.t. $\lambda$) distributed.
\end{theorem}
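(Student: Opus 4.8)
The plan is to exhibit two disjoint Borel subsets of $[0,1]$, one carrying all the mass of $\mu_\eta$ and one carrying all of the Lebesgue measure; this is exactly the assertion $\mu_\eta\perp\lambda$. The whole argument consists in contrasting the behaviour of the counting functions $N_{i_0}(x,n)$ under the two measures: Theorem~\ref{theorem  pro skinchennist povtoriv synvoliv v rozkladi Ostr-Pierce} already tells us that the digit $i_0$ occurs only finitely often in the $\bO1$-expansion of $\lambda$-almost every $x$, whereas the divergence hypothesis \eqref{umova syngul Ostrogr} should force $i_0$ to occur infinitely often for $\mu_\eta$-almost every $x$.

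First I would fix $i_0$ and apply Theorem~\ref{theorem  pro skinchennist povtoriv synvoliv v rozkladi Ostr-Pierce}, which yields a Borel set $F\subset[0,1]$ with $\lambda(F)=1$ on which $\limsup_{n\to\infty}N_{i_0}(x,n)<+\infty$. Next, on the underlying probability space I would consider the events $B_k=\set{\eta_k=i_0}$. Since the $\eta_k$ are independent, so are the $B_k$, and $\sum_{k=1}^{\infty}P(B_k)=\sum_{k=1}^{\infty}p_{i_0k}=+\infty$ by \eqref{umova syngul Ostrogr}. The divergence form of the Borel--Cantelli lemma then gives $P\big(\limsup_k B_k\big)=1$, that is, almost surely $\eta_k=i_0$ for infinitely many $k$. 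Since every sequence of positive integers is an admissible $\bO1$-symbol sequence, $g_k(\eta)=\eta_k$ holds almost surely, so, putting $E=\set{x\in[0,1]:g_k(x)=i_0 \text{ for infinitely many }k}$, we obtain $\mu_\eta\big(E\cup(\mathbb{Q}\cap[0,1])\big)=1$; in particular $N_{i_0}(x,n)\to\infty$ for $\mu_\eta$-almost every $x$.

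It then remains to observe that $E\cap F=\varnothing$, because on $E$ the counting function tends to $+\infty$ while on $F$ it stays bounded; since moreover $\mathbb{Q}\cap[0,1]$ is countable, the set $E\cup(\mathbb{Q}\cap[0,1])$ is $\lambda$-null while carrying full $\mu_\eta$-measure, hence $\mu_\eta\perp\lambda$, which is the claimed singularity. I would stress that this argument proves singularity directly and uses no continuity of $\eta$, so it also covers the case when the distribution of $\eta$ has a nontrivial discrete part.

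The routine points are the independence of the events $B_k$, the identification $g_k(\eta)=\eta_k$ almost surely (the only exceptional values of $\eta$ being rational, a countable and hence negligible set), and the fact that $E$ and $F$ are Borel. The single essential ingredient — and the step I expect to be the real crux — is that the divergence half of the Borel--Cantelli lemma requires genuine independence of the $\eta_k$, not merely control of their one-dimensional marginals. This is precisely the place where the hypothesis meets, from the opposite side, the metric estimate $\lambda(A^{i_0}_k)\le 2^{-k}$ underlying Theorem~\ref{theorem  pro skinchennist povtoriv synvoliv v rozkladi Ostr-Pierce}: the mismatch between ``$\lambda$-summable along $k$'' and ``$\mu_\eta$-nonsummable along $k$'' for the sequence $\big(p_{i_0k}\big)$ is exactly what produces the mutual singularity.
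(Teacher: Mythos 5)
Your proposal is correct and follows essentially the same route as the paper: the paper also defines $A_k^{i_0}=\set{x: g_k(x)=i_0}$, applies the divergence (second) Borel--Cantelli lemma to these $\mu_\eta$-independent events to get $\mu_\eta(\limsup_k A_k^{i_0})=1$, and invokes Theorem~\ref{theorem  pro skinchennist povtoriv synvoliv v rozkladi Ostr-Pierce} to conclude that the same limsup set is Lebesgue-null, yielding $\mu_\eta\perp\lambda$. Your extra care about the countable set of rationals is a harmless refinement the paper leaves implicit.
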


\begin{proof} Let
$$
 A_k^{i_0}:=\{x: x= \bO1(g_1(x), g_2(x), \dots, g_n(x), \dots);  g_k(x)=i_0  \},$$ and let   $$A^i_{\infty} = \bigcup\limits_{n=1}^{\infty} \bigcap\limits_{k=n}^{\infty} A_k^i =  \limsup\limits_{k \to \infty} A_k^i.$$
The events $A_k^{i_0}, k \in \mathbb{N}$ are independent w.r.t. the probability measure $\mu_{\eta}$ and $\mu_{\eta}(A_k^{i_0})= p_{i_0 k}.$ So, by the inverse Borel-Cantelli lemma for independent events, the condition $$\sum_{k=1}^{\infty} \lambda(A_k^{i_0}) = \sum_{k=1}^{\infty} p_{i_{0}k} = +\infty$$ implies the equality $\mu_{\eta}(A^i_{\infty}) = 1$. On the other hand, from Theorem \ref{theorem  pro skinchennist povtoriv synvoliv v rozkladi Ostr-Pierce} it follows directly that $\lambda(A^i_{\infty})=0,$ which proves a mutual singularity of the measure $\mu_{\eta}$ and the Lebesgue measure.
\end{proof}

\textbf{Corollary.} If  there exists a symbol "$i_0$" such that $
\sum\limits_{k=1}^{\infty} p_{i_{0}k} = +\infty,
$ then the random variable $\eta$ with independent increments of the Ostrogradsky-Sierpi\'nski-Pierce expansion has:

1) a pure discrete distribution if and only if $\prod\limits_{k=1}^{\infty} \max\limits_i p_{ik} >0$;

2)  a singularly continuous distribution in all other cases.

\bigskip
\textbf{Remark.} Condition (\ref{umova syngul Ostrogr}) plays an important role in our proof of the singularity of $\mu_{\eta}$, but we strongly believe that the distribution of $\eta$ is orthogonal to the Lebesgue measure without any additional restrictions.

\bigskip
\textbf{Conjecture.} For any choice of the stochastic matrix $\|p_{ik}\|$ the random variable $\eta$ with independent increments of the Ostrogradsky-Sierpi\'nski-Pierce expansion is singular w.r.t. Lebesgue measure.

\bigskip
\bigskip
\textbf{ Acknowledgement } This work was partly supported by DFG 436 UKR 113/80 and
113/97 projects and by Alexander von Humboldt Foundation.

\bigskip

\end{document}